\author[T.~Mettler]{Thomas Mettler}
\address{Institut f\"ur Mathematik, Goethe-Universit\"at Frankfurt, 60325 Frankfurt am Main, Germany}
\email{mettler@math.uni-frankfurt.de, mettler@math.ch}
\title[metrisability of projective surfaces]{Metrisability of Projective Surfaces and Pseudo-Holomorphic Curves}
\renewcommand{\frame}{F}
\newcommand{\twist}{Z}
\renewcommand{\Re}{\operatorname{Re}}
\newcommand{\phispace}{\Gamma(V_0)}
\newcommand{\dualbeta}{B}
\date{July 7, 2020}
\begin{document}

\begin{abstract}
We show that the metrisability of an oriented projective surface is equivalent to the existence of pseudo-holomorphic curves. A projective structure $\mathfrak{p}$ and a volume form $\sigma$ on an oriented surface $M$ equip the total space of a certain disk bundle $\twist \to M$ with a pair $(J_{\mathfrak{p}},\mathfrak{J}_{\mathfrak{p},\sigma})$ of almost complex structures. A conformal structure on $M$ corresponds to a section of $\twist \to M$ and $\mathfrak{p}$ is metrisable by the metric $g$ if and only if $[g] : M \to \twist$ is a pseudo-holomorphic curve with respect to $J_{\mathfrak{p}}$ and $\mathfrak{J}_{\mathfrak{p},dA_g}$. 
\end{abstract}

\maketitle

\section{Introduction}

A~\textit{projective structure} on a smooth manifold consists of an equivalence class $\mathfrak{p}$ of torsion-free connections on its tangent bundle, where two such connections are called equivalent if they have the same geodesics up to parametrisation. A projective structure $\mathfrak{p}$ is called~\textit{metrisable} if it contains the Levi-Civita connection of some Riemannian metric. The problem of (locally) characterizing the projective structures that are metrisable was first studied in the work of R.~Liouville~\cite{21.0317.01} in 1889, but was solved only relatively recently by Bryant, Dunajski and Eastwood for the case of two dimensions~\cite{MR2581355}. Since then, there has been renewed interest in the problem, see~\cite{MR3158041,MR3763810,MR3536153,arXiv:1712.06191,MR2384718,MR3969434,MR3159950,MR2876789,arXiv:1812.03591} for related recent work.

The purpose of this short note is to show that in the case of an oriented projective surface $(M,\mathfrak{p})$, the metrisability of $\mathfrak{p}$ is equivalent to the existence of certain pseudo-holomorphic curves. 

An orientation compatible complex structure on $M$ corresponds to a section of the bundle $\pi : \twist \to M$ whose fibre at $x \in M$ consists of the orientation compatible linear complex structures on $T_xM$. The choice of a torsion-free connection $\nabla$ on $TM$ equips $\twist$ with an almost complex structure $J$~\cite{MR728412,MR812312}. Namely, at $j \in \twist$ we lift $j$ horizontally and take a natural complex structure on each fibre vertically. It turns out that $J$ is always integrable and does only depend on the projective equivalence class $\mathfrak{p}$ of $\nabla$, we thus denote it by $J_{\mathfrak{p}}$. Reversing the orientation on each fibre yields another almost complex structure $\mathfrak{J}$ which is however never integrable and is not projectively invariant. Fixing a volume form $\sigma$ on the projective surface $(M,\mathfrak{p})$ determines a unique representative connection ${}^{\sigma}\nabla \in \mathfrak{p}$ which preserves $\sigma$. We will write $\mathfrak{J}_{\mathfrak{p},\sigma}$ for the non-integrable almost complex structure arising from ${}^{\sigma}\nabla \in \mathfrak{p}$. 

The choice of a conformal structure $[g]$ on an oriented surface $M$ defines an orientation compatible complex structure by rotating a tangent vector counterclockwise by $\pi/2$ with respect to $[g]$. Thus, we may think of a conformal structure as a section $[g] : M \to \twist$. Denoting the area form of a Riemannian metric $g$ by $dA_g$, we show:
\begin{thm}\label{mainthm:pseudohol}
An oriented projective surface $(M,\mathfrak{p})$ is metrisable by the metric $g$ on $M$ if and only if $[g] : M \to (\twist,J_{\mathfrak{p}})$ is a holomorphic curve and $[g] : M \to (\twist,\mathfrak{J}_{\mathfrak{p},dA_g})$ is a pseudo-holomorphic curve. 
\end{thm}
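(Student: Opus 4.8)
My approach is to translate both (pseudo-)holomorphicity conditions into statements about the covariant derivative of the complex structure field $j$ on $M$ determined by $[g]$, and then to observe that the two conditions, when read off from the distinguished connection ${}^{dA_g}\nabla$, force $\nabla j$ to vanish identically.

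First I would examine the differential of the section $s=[g]:M\to\twist$. Using the splitting $T\twist=H\oplus V$ induced by a representative $\nabla\in\mathfrak{p}$, one has $ds=\mathrm{id}_H\oplus\theta$, where the horizontal component is the tautological lift and the vertical component $\theta$ is, by the standard description of $\twist$ as an associated bundle, the covariant derivative $\nabla j$. Because $j^2=-\mathrm{id}$, the quantity $\nabla j$ is an $\operatorname{End}(TM)$-valued $1$-form taking values in the endomorphisms anticommuting with $j$, which is exactly the vertical tangent space $T_j(\twist_x)$. Since the source $M$ carries the complex structure $j$, and the horizontal part of both $J_{\mathfrak{p}}$ and $\mathfrak{J}_{\mathfrak{p},\sigma}$ is the horizontal lift of $j$, the horizontal component of the equation $ds\circ j=J\circ ds$ is satisfied automatically; only the vertical component imposes a condition.

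Next comes the type decomposition, which is the heart of the matter. The natural fibre complex structure acts on $T_j(\twist_x)$ by $A\mapsto jA$, so that $J_{\mathfrak{p}}$-holomorphicity is equivalent to $\nabla j$ being of type $(1,0)$, i.e.\ $(\nabla j)^{0,1}=0$; reversing the fibre orientation replaces this action by $A\mapsto -jA$, so $\mathfrak{J}_{\mathfrak{p},\sigma}$-pseudo-holomorphicity is equivalent to $(\nabla j)^{1,0}=0$. A short computation shows that under a change of representative in $\mathfrak{p}$ the tensor $\nabla j$ changes only by a term of type $(1,0)$; hence the vanishing of $(\nabla j)^{0,1}$ is projectively invariant, matching the fact that $J_{\mathfrak{p}}$ is well defined on $\mathfrak{p}$, whereas the vanishing of $(\nabla j)^{1,0}$ depends on the representative, matching the non-invariance of $\mathfrak{J}$. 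Reading both conditions off from the single connection ${}^{dA_g}\nabla$ therefore gives $({}^{dA_g}\nabla\,j)^{0,1}=0$ and $({}^{dA_g}\nabla\,j)^{1,0}=0$, whence ${}^{dA_g}\nabla\,j=0$.

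To conclude, I would use that $j$ is rotation by $\pi/2$, so that $g(X,Y)=dA_g(X,jY)$; a torsion-free connection preserving both $dA_g$ and $j$ then preserves $g$ and, by uniqueness of the Levi-Civita connection, equals ${}^g\nabla$. Thus ${}^{dA_g}\nabla={}^g\nabla\in\mathfrak{p}$, which is precisely metrisability by $g$. The converse is immediate: if ${}^g\nabla\in\mathfrak{p}$ then ${}^g\nabla$ preserves $dA_g$, so ${}^{dA_g}\nabla={}^g\nabla$ and ${}^g\nabla\,j=0$; the vertical component $\theta$ vanishes and $[g]$ is at once $J_{\mathfrak{p}}$-holomorphic and $\mathfrak{J}_{\mathfrak{p},dA_g}$-pseudo-holomorphic. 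I expect the main obstacle to be the bookkeeping of the third paragraph: pinning down the identification $\theta=\nabla j$ and tracking the two fibre complex structures so that the $(1,0)$ and $(0,1)$ parts are assigned to the correct almost complex structure. Once that is secured, the forced vanishing $\nabla j=0$ and the recovery of $g$ are routine.
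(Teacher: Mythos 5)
Your proof is correct, but it takes a genuinely different route from the paper. The paper does not work with $\nabla j$ at all: it first decomposes the volume-preserving representative of $\mathfrak{p}$ as ${}^{g}\nabla+g\otimes\beta^{\sharp}-\tfrac{1}{3}\mathrm{Sym}(\beta)+\varphi$ using the unique Weyl connection determined by $[g]$ (from \cite{arXiv:1510.01043,MR4109900}), and then runs a moving-frames computation on the pullback $[g]^*\frame$ of the frame bundle, obtaining $\zeta=\tfrac{4}{3}\ov{b}\omega+2\ov{a}\ov{\omega}$; the two holomorphicity conditions become $\omega\wedge\zeta=0$ and $\omega\wedge\ov{\zeta}=0$, i.e.\ $\varphi=0$ and $\beta=0$ separately (\cref{ppn:pseudohol}), and metrisability is the conjunction. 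Your argument replaces all of this by the observation that the vertical part of $d[g]$ relative to ${}^{dA_g}\nabla$ is ${}^{dA_g}\nabla j$, that the two almost complex structures kill complementary types of this anti-$j$-linear $1$-form, and that ${}^{dA_g}\nabla j=0$ together with ${}^{dA_g}\nabla\, dA_g=0$ forces ${}^{dA_g}\nabla={}^{g}\nabla$ via $g=dA_g(\cdot,j\cdot)$. This is shorter and more conceptual for the theorem as stated, and it sidesteps the input from the Weyl-connection decomposition entirely; what it does not deliver is the finer information of \cref{ppn:pseudohol} -- namely that $J_{\mathfrak{p}}$-holomorphicity alone is equivalent to $\mathfrak{p}$ containing a Weyl connection for $[g]$, and $\mathfrak{J}_{\mathfrak{p},dA_g}$-holomorphicity alone to that Weyl connection being ${}^{g}\nabla$ -- which the paper needs for \cref{Corollary:localexistence}. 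Your own flagged worry (matching the $(1,0)$/$(0,1)$ labels to the correct fibre complex structure) is harmless here, since the theorem requires both components of $\nabla j$ to vanish, so the conclusion is insensitive to the labelling; you would, however, need to fix the convention carefully if you wanted to recover the two halves of \cref{ppn:pseudohol} individually.
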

Applying a general existence result for pseudo-holomorphic curves~\cite[Theorem III]{MR0149505} it follows that locally we can always find a Riemannian metric $g$ so that $[g] : M \to (\twist,J_{\mathfrak{p}})$ is a holomorphic curve or so that $[g] : M \to (\twist,\mathfrak{J}_{\mathfrak{p},dA_g})$ is a pseudo-holomorphic curve. The geometric significance of the existence of such (pseudo-)holomorphic curves is given in \cref{ppn:pseudohol} below. 

The construction of the (integrable) almost complex structure $J_{\mathfrak{p}}$ on $Z$ given in~\cite{MR728412,MR812312} is adapted from the construction of an almost complex structure $J$ on the twistor space $Y \to N$ of an oriented Riemannian $4$-manifold $(N,g)$, see~\cite{MR506229}. In the Riemannian setting the almost complex structure $J$ is integrable if and only if $g$ is self-dual. In~\cite{MR848842}, Eells--Salamon observe that reversing the orientation on each fibre of $Y \to N$ associates another almost complex structure $\mathfrak{J}$ on $Y$ to $(N,g)$ which is never integrable. Thus, the non-integrable almost complex structure $\mathfrak{J}$ used here may be thought of as the affine analogue of the non-integrable almost complex structure in oriented Riemannian $4$-manifold geometry.   

\subsection*{Acknowledgements} The author is grateful to Maciej Dunajski and Gabriel Paternain for helpful conversations and correspondence. A part of the research for this article was carried out while the author was visiting FIM at ETH Z\"urich. The author would like to thank FIM for its hospitality and DFG for partial support via the priority programme SPP 2026 ``Geometry at Infinity''.

\section{Pseudo-Holomorphic Curves and Metrisability}

Recall that the set of torsion-free connections on a surface $M$ is an affine space modelled on the smooth sections of the vector bundle $V=S^2(T^*M)\otimes TM$. We have a natural trace mapping $\tr : V \to T^*M$, given in abstract index notation by $A^i_{jk} \mapsto A^{k}_{ik}$, as well as an inclusion $\mathrm{Sym} : T^*M \to V$, given by $b_i \mapsto \delta^i_j b_k+\delta^i_kb_j$. The bundle $V$ thus decomposes as $V=V_0\oplus T^*M$, where $V_0$ denotes the trace-free part of $V$. We have (Cartan, Eisenhart, Weyl) -- the reader may also consult~\cite{MR2384705} for a modern reference:
\begin{lem}
Two torsion-free connections $\nabla$ and $\nabla^{\prime}$ on $TM$ are projectively equivalent if and only if there exists a $1$-form $\xi$ on $M$ so that $\nabla-\nabla^{\prime}=\mathrm{Sym}(\xi)$.
\end{lem}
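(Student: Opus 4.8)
The statement is an equivalence, and the natural framing is the affine one already set up: writing $A := \nabla - \nabla^{\prime}\in\Gamma(V)$, so that $A^i_{jk}$ is symmetric in its lower indices, the claim becomes that $A$ has the pure-trace form $\mathrm{Sym}(\xi)$ precisely when $\nabla$ and $\nabla^{\prime}$ share their unparametrised geodesics. The forward implication is a one-line computation: if $A=\mathrm{Sym}(\xi)$ and $\gamma$ is an affinely parametrised $\nabla^{\prime}$-geodesic, then $(\nabla_{\dot\gamma}\dot\gamma)^i = A^i_{jk}\dot\gamma^j\dot\gamma^k = (\delta^i_j\xi_k+\delta^i_k\xi_j)\dot\gamma^j\dot\gamma^k = 2\,\xi(\dot\gamma)\,\dot\gamma^i$, which is proportional to $\dot\gamma$, so $\gamma$ is a $\nabla$-geodesic up to reparametrisation; since replacing $\xi$ by $-\xi$ interchanges the roles of $\nabla$ and $\nabla^{\prime}$, the two connections are projectively equivalent.

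For the converse I would first turn projective equivalence into a pointwise condition on $A$. Fix $x\in M$ and $v\in T_xM$, and let $\gamma$ be the $\nabla^{\prime}$-geodesic with $\dot\gamma(0)=v$. By hypothesis $\gamma$ is, after reparametrisation, a $\nabla$-geodesic, so $\nabla_{\dot\gamma}\dot\gamma$ is proportional to $\dot\gamma$ at $x$; since $\nabla^{\prime}_{\dot\gamma}\dot\gamma=0$ there, this reads $A^i_{jk}v^jv^k=\rho(v)\,v^i$ for some real scalar $\rho(v)$ depending on $v$. As $x$ and $v$ vary this defines a function $\rho$ that is smooth off the zero section of $TM$.

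The crux -- and the step I expect to be the only genuine work -- is to show that $v\mapsto\rho(v)$ is the evaluation of a $1$-form, i.e. that it is linear in $v$. I would extract this by polarisation. Differentiating the identity $A^i_{jk}v^jv^k=\rho(v)\,v^i$ in $v$ and using the symmetry of $A$ in its lower indices gives $2A^i_{jk}v^j=\tfrac{\partial\rho}{\partial v^k}\,v^i+\rho\,\delta^i_k$. Contracting this with $v^k$ and comparing with the original identity yields Euler's relation $v^k\tfrac{\partial\rho}{\partial v^k}=\rho$; contracting instead the upper index $i$ with $k$ gives $2\,\tr(A)(v)=v^k\tfrac{\partial\rho}{\partial v^k}+n\,\rho=(n+1)\,\rho$, where $n=\dim M$. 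Hence $\rho(v)=\tfrac{2}{n+1}\,\tr(A)(v)$ is manifestly linear, and setting $\xi:=\tfrac{1}{n+1}\tr(A)$ we get $A^i_{jk}v^jv^k=2\,\xi_k v^k v^i=(\delta^i_j\xi_k+\delta^i_k\xi_j)v^jv^k$ for all $v$. Since both $A^i_{jk}$ and $\mathrm{Sym}(\xi)^i_{jk}=\delta^i_j\xi_k+\delta^i_k\xi_j$ are symmetric in $(j,k)$ and induce the same quadratic form in $v$, polarisation forces $A=\mathrm{Sym}(\xi)$, completing the proof.
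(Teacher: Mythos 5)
Your proof is correct. Note that the paper itself offers no argument for this lemma: it is stated as a classical result of Cartan, Eisenhart and Weyl, with a pointer to Eastwood's notes \cite{MR2384705}, so there is no in-paper proof to compare against --- you have simply supplied the standard argument. Both directions check out: the forward direction is the usual observation that $\nabla_{\dot\gamma}\dot\gamma=2\xi(\dot\gamma)\dot\gamma$ characterises unparametrised geodesics, together with the remark that $\xi\mapsto-\xi$ symmetrises the roles of the two connections; and in the converse your trace computation correctly identifies $\xi=\tfrac{1}{n+1}\tr(A)$ (consistent with the paper's convention $(\tr A)_i=A^k_{ik}$ and, for $n=2$, with the factor $\tfrac13$ appearing in the paper's construction of ${}^{\sigma}\nabla$). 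Two minor points worth tightening: you should justify the smoothness of $\rho$ off the zero section before differentiating it (e.g.\ $\rho(v)=Q^i(v)/v^i$ near any $v$ with $v^i\neq0$, where $Q^i(v)=A^i_{jk}v^jv^k$ is quadratic, hence $\rho$ is smooth there); alternatively, the step can be made purely algebraic, since a symmetric vector-valued bilinear form $B$ with $B(v,v)\parallel v$ for all $v$ satisfies $B(v,w)=\xi(v)w+\xi(w)v$ by polarisation alone, which avoids any appeal to calculus in the fibre.
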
 
This gives immediately: 
\begin{lem}
Let $(M,\mathfrak{p})$ be an oriented projective surface and $\sigma$ a volume form on $M$. Then there exists a unique representative connection ${}^{\sigma}\nabla \in \mathfrak{p}$ preserving $\sigma$. 
\end{lem}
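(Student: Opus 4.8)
The plan is to start from an arbitrary representative $\nabla \in \mathfrak{p}$ and to determine the $1$-form $\xi$ for which $\nabla - \mathrm{Sym}(\xi)$ preserves $\sigma$. By the preceding lemma every connection in $\mathfrak{p}$ arises in this way, so existence and uniqueness of ${}^{\sigma}\nabla$ reduce to showing that the equation $\bigl(\nabla-\mathrm{Sym}(\xi)\bigr)\sigma = 0$ has a unique solution $\xi$.

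First I would observe that, since $\sigma$ is a nowhere-vanishing section of the real line bundle $\Lambda^2 T^*M$, the covariant derivative $\nabla \sigma \in \Gamma(T^*M \otimes \Lambda^2 T^*M)$ can be written uniquely as $\nabla \sigma = \beta \otimes \sigma$ for some $1$-form $\beta$. The decisive step is then to compute how $\beta$ transforms under $\nabla \mapsto \nabla - \mathrm{Sym}(\xi)$. In abstract index notation the difference $\bigl(\nabla-\mathrm{Sym}(\xi)\bigr)_k\sigma_{ij} - \nabla_k \sigma_{ij}$ equals $\mathrm{Sym}(\xi)^l_{ki}\sigma_{lj} + \mathrm{Sym}(\xi)^l_{kj}\sigma_{il}$. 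Expanding $\mathrm{Sym}(\xi)^l_{ab} = \delta^l_a \xi_b + \delta^l_b \xi_a$ yields $\xi_i \sigma_{kj} + \xi_j \sigma_{ik} + 2\xi_k \sigma_{ij}$, and simplifying the first two terms with the two-dimensional identity $\xi_{[i}\sigma_{jk]} = 0$ collapses this to $3\,\xi_k\,\sigma_{ij}$. Hence
\[
\bigl(\nabla - \mathrm{Sym}(\xi)\bigr)\sigma = (\beta + 3\xi)\otimes \sigma .
\]

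Requiring the left-hand side to vanish forces $\xi = -\tfrac{1}{3}\beta$, which exists and is unique, so ${}^{\sigma}\nabla := \nabla - \mathrm{Sym}\!\bigl(-\tfrac{1}{3}\beta\bigr)$ is the desired representative. Uniqueness within $\mathfrak{p}$ follows at once: any other $\sigma$-preserving connection differs from ${}^{\sigma}\nabla$ by some $\mathrm{Sym}(\eta)$, and the transformation formula applied to ${}^{\sigma}\nabla$ (for which $\beta = 0$) gives $3\,\eta \otimes \sigma = 0$, whence $\eta = 0$ as $\sigma$ is nowhere zero. The main obstacle is the middle step: verifying that the correction term is exactly $3\,\xi \otimes \sigma$ and, crucially, that the coefficient does not vanish. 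This relies on $M$ being a surface, where every alternating $3$-tensor vanishes and correspondingly $\tr \circ \mathrm{Sym} = (\dim M + 1)\,\mathrm{id} = 3\,\mathrm{id}$ on $T^*M$; the non-vanishing of this factor $3$ is precisely what renders the linear equation for $\xi$ uniquely solvable.
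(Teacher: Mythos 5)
Your proposal is correct and follows essentially the same route as the paper: write $\nabla\sigma=\beta\otimes\sigma$, compute that shifting $\nabla$ by $\mathrm{Sym}(\xi)$ changes this by $\mp 3\,\xi\otimes\sigma$ (the paper's ``elementary computation,'' which you carry out explicitly via the two-dimensional identity $\xi_{[i}\sigma_{jk]}=0$, equivalently $\tr\circ\mathrm{Sym}=3\,\mathrm{id}$), and solve the resulting linear equation for $\xi$. The only difference from the paper is the sign convention $\nabla-\mathrm{Sym}(\xi)$ versus $\nabla+\mathrm{Sym}(\xi)$, which is immaterial, and your resulting connection $\nabla+\tfrac13\mathrm{Sym}(\beta)$ agrees with the paper's ${}^{\sigma}\nabla=\nabla+\tfrac13\mathrm{Sym}(\alpha)$.
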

\begin{proof}
Let $\nabla \in \mathfrak{p}$ be a representative connection. Since $\sigma$ is a volume form there exists a unique $1$-form $\alpha$ on $M$ such that $\nabla \sigma=\alpha\otimes \sigma$. An elementary computation shows that the connection $\nabla+\mathrm{Sym}(\xi)$ satisfies
\[
\left(\nabla+\mathrm{Sym}(\xi)\right)\sigma=\nabla\sigma-3\xi\otimes \sigma, 
\]
for all $\xi \in \Omega^1(M)$. Thus the connection ${}^{\sigma}\nabla=\nabla+\frac{1}{3}\mathrm{Sym}(\alpha)$ preserves $\sigma$ and clearly is the only connection in $\mathfrak{p}$ doing so. 
\end{proof}

We also have: 

\begin{lem}\label{Lemma:stillpresvolume}
Let $\varphi\in\phispace$ and $\nabla$ be a torsion-free connection on $TM$. Then $\nabla+\varphi$ preserves a volume form $\sigma$ on $M$  if and only if $\nabla$ preserves the volume form $\sigma$. 
\end{lem}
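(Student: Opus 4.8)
The plan is to compute directly how the covariant derivative $\nabla\sigma$ of the volume form changes under the modification $\nabla\mapsto\nabla+A$ for a general section $A\in\Gamma(V)$, and then to show that this change depends only on the trace part $\tr(A)\in\Omega^1(M)$. Since $\varphi\in\phispace$ is by definition trace-free, it will follow that $(\nabla+\varphi)\sigma=\nabla\sigma$, and hence that one of these vanishes precisely when the other does, which is exactly the claimed equivalence. This mirrors, and indeed generalises, the computation already carried out in the proof of the previous lemma for the special case $A=\mathrm{Sym}(\xi)$.

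In abstract index notation a volume form is a (nowhere-vanishing) section $\sigma_{ij}$ of $\Lambda^2T^*M$, and adding $A^i_{jk}\in\Gamma(V)$ to $\nabla$ alters the covariant derivative of $\sigma$ on its two lower indices, giving $(\nabla+A)_k\sigma_{ij}=\nabla_k\sigma_{ij}-A^l_{ki}\sigma_{lj}-A^l_{kj}\sigma_{il}$. The key structural observation is that, for each fixed $k$, the correction tensor $A^l_{ki}\sigma_{lj}+A^l_{kj}\sigma_{il}$ is antisymmetric under exchange of the remaining indices $i$ and $j$: swapping $i\leftrightarrow j$ and invoking the antisymmetry of $\sigma_{ij}$ reverses its overall sign. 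Because $M$ is a surface, the bundle of $2$-forms $\Lambda^2T^*M$ is a line bundle, so any section antisymmetric in $i,j$ must be a functional multiple of $\sigma_{ij}$ itself; thus the correction term equals $c_k\,\sigma_{ij}$ for some $c_k\in\Omega^1(M)$.

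The main point is then to identify the coefficient $c_k$. I would do this by contracting the correction term with the dual bivector $\sigma^{ij}$ normalised so that $\sigma^{ij}\sigma_{lj}=\delta^i_l$; since $\sigma^{ij}\sigma_{ij}=2$ in two dimensions, this contraction returns $2c_k$. Carrying out the two resulting contractions and using the symmetry of $A^l_{ki}$ in its lower indices $k,i$, each term collapses to the trace $\tr(A)_k=A^l_{kl}$, so that $c_k=\tr(A)_k$ and hence $(\nabla+A)\sigma=\nabla\sigma-\tr(A)\otimes\sigma$. As a consistency check, taking $A=\mathrm{Sym}(\xi)$ gives $\tr(\mathrm{Sym}(\xi))=3\xi$ and recovers the formula $\bigl(\nabla+\mathrm{Sym}(\xi)\bigr)\sigma=\nabla\sigma-3\xi\otimes\sigma$ used earlier. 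Specialising to $A=\varphi\in\phispace$, the trace vanishes, whence $(\nabla+\varphi)\sigma=\nabla\sigma$ and the equivalence follows at once. The only delicate step is the coefficient computation, which relies essentially on the two-dimensionality of $M$ (through $\sigma^{ij}\sigma_{ij}=2$ and the one-dimensionality of $\Lambda^2T^*M$); everything else is a routine verification.
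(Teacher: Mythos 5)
Your proposal is correct and follows essentially the same route as the paper: the paper's proof simply asserts that "an elementary computation shows" that $\nabla$ and $\nabla+\varphi$ induce the same connection on $\Lambda^2(T^*M)$ when $\varphi$ is trace-free, and your argument is precisely that elementary computation, carried out explicitly via the identity $(\nabla+A)\sigma=\nabla\sigma-\tr(A)\otimes\sigma$. The consistency check against the formula $(\nabla+\mathrm{Sym}(\xi))\sigma=\nabla\sigma-3\xi\otimes\sigma$ from the preceding lemma is a nice touch and confirms the sign and trace conventions.
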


\begin{proof}
Since $\varphi \in \Gamma(V_0)$, an elementary computation shows that the connections $\nabla$ and $\nabla+\varphi$ induce the same connection on the bundle $\Lambda^2(T^*M)$ whose non-vanishing sections are the volume forms. 
\end{proof}

For our purposes it is convenient to construct the almost complex structures $(J,\mathfrak{J})$ associated to $\nabla$ in terms of the connection form $\theta$ on the oriented frame bundle of $M$. The oriented frame bundle $\frame$ of the oriented surface $M$ is the bundle $\upsilon : \frame \to M$ whose fibre at $x \in M$ consists of the linear isomorphisms $u : \R^2 \to T_xM$ that are orientation preserving with respect to the standard orientation on $\R^2$ and the given orientation on $T_xM$. The group $\mathrm{GL}^+(2,\R)$ acts transitively from the right on each fibre by the rule $R_a(u)=u\circ a$ for all $a \in \mathrm{GL}^+(2,\R), u \in \frame$ and this action turns $\upsilon : \frame \to M$ into a principal right $\mathrm{GL}^+(2,\R)$-bundle. The total space $\frame$ carries a tautological $\R^2$-valued $1$-form $\omega$ defined by $\omega_u=u^{-1}\circ \upsilon^{\prime}_{u}$ and $\omega$ satisfies the equivariance property 
\begin{equation}\label{eq:equiomega}
R_a^*\omega=a^{-1}\omega
\end{equation} 
for all $a \in \mathrm{GL}^+(2,\R)$. We may embed $\mathrm{GL}(1,\C)$ as the subgroup of $\mathrm{GL}^+(2,\R)$ consisting of matrices that commute with the standard linear complex structure on $\R^2$. Note that may think of the oriented frame bundle $\upsilon : \frame \to M$ as a principal $\mathrm{GL}(1,\C)$-bundle over $\twist=\frame/\mathrm{GL}(1,\C)$. We may describe an almost complex structure on $\twist$ by describing the pullback of its $(1,\!0)$-forms to $\frame$. The pullback of a $1$-form on $\twist$ to $\frame$ is~\textit{semi-basic} for the projection $\nu : \frame \to \twist$, that is, it vanishes when evaluated on vector fields that are tangent to the fibres of $\nu$. For $y \in \mathfrak{gl}(2,\R)$ we denote by $Y_y$ the vector field on $\frame$ that is generated by the flow $R_{\exp(ty)}$. Clearly, the vector fields $Y_y$ for $y \in \mathfrak{gl}(1,\C)$ span the vector fields on $\frame$ that are tangent to the fibres of $\nu$. 

Let $\nabla$ be a torsion-free connection on $TM$ with connection form $\theta=(\theta^i_j)$ on $\frame$. Recall that $\theta$ satisfies the equivariance property
\begin{equation}\label{eq:equieta}
R_a^*\theta=a^{-1}\theta a
\end{equation}
for all $a \in \mathrm{GL}^+(2,\R)$ and the structure equations
\begin{align}
\begin{split}\label{eq:struceq}
\d\omega^i&=-\theta^i_j\wedge\omega^j,\\
\d\theta^i_j&=-\theta^i_k\wedge\theta^k_j+\Theta^i_j,
\end{split}
\end{align}
where $\Theta=(\Theta^i_j)$ denotes the curvature form of $\theta$. Since $\theta$ is a principal connection on $\frame$ it also satisfies $\theta(Y_y)=y$ for all $y \in \mathfrak{gl}(2,\R)$.  
Since the Lie algebra of $\mathrm{GL}(1,\C)$ is spanned by the matrices of the form
\[
\begin{pmatrix} z & -w \\ w & z\end{pmatrix}
\]
for $(z,w)\in \R^2$, the complex-valued $1$-forms on $\frame$ that are semi-basic for the projection $\nu : \frame \to \twist$ are spanned by the forms $\omega=\omega^1+\i\omega^2$ and 
\[
\zeta=(\theta^1_1-\theta^2_2)+\i \left(\theta^1_2+\theta^2_1\right)
\]
and their complex conjugates. We now have:

\begin{ppn}\label{Proposition:exalmostcplx}
Let $\nabla$ be a torsion-free connection on $TM$ with connection form $\theta=(\theta^i_j)$ on $\frame$. Then there exists a unique pair $(J,\mathfrak{J})$ of almost complex structures on $\twist$ whose $(1,\! 0)$-forms pull back to become linear combinations of the forms $(\omega,\zeta)$ in the case of $J$ and to $(\omega,\ov{\zeta})$ in the case of $\mathfrak{J}$. Moreover, the almost complex structure $J$ is always integrable, whereas $\mathfrak{J}$ is never integrable. 
\end{ppn}

\begin{proof}
Writing
\[
r\mathrm{e}^{\i\phi}\simeq \begin{pmatrix} r\cos \phi & -r\sin\phi \\ r\sin\phi & r\cos\phi\end{pmatrix}
\]
for the elements of $\mathrm{GL}(1,\C)$, the equivariance property~\eqref{eq:equiomega} of $\omega$ and~\eqref{eq:equieta} of $\theta$ implies
\begin{equation}\label{eq:equionezero}
(R_{r\mathrm{e}^{\i\phi}})^*\omega=\frac{1}{r}\mathrm{e}^{\i\phi}\omega\quad\text{and}\quad (R_{r\mathrm{e}^{\i\phi}})^*\zeta=\mathrm{e}^{-2\i\phi}\zeta.
\end{equation}
It follows that there exists a unique almost complex structure $J$ on $\twist$ whose $(1,\! 0)$-forms pull back to $\frame$ to become linear combinations of the forms $\omega,\zeta$. Likewise there exists a unique almost complex structure $\mathfrak{J}$ on $\twist$ whose $(1,\! 0)$-forms pull back to $F$ to become linear combinations of the forms $\omega,\ov{\zeta}$. Furthermore, simple computations using the structure equations~\eqref{eq:struceq} imply that
\[
0=\d\zeta\wedge\omega\wedge\zeta=\d\omega\wedge\omega\wedge\zeta.
\]
Consequently, the Newlander-Nirenberg theorem~\cite{MR0088770} implies that $J$ is integrable. On the other hand, we get
\[
\d\omega\wedge\omega\wedge\ov{\zeta}=\frac{1}{2}\omega\wedge\ov{\omega}\wedge\zeta\wedge\ov{\zeta}
\]
so that $\mathfrak{J}$ is never integrable. 
\end{proof}
\begin{rmk}\label{rmk:hinvariance}
The equivariance properties~\eqref{eq:equionezero} imply that the bundles
\[
H=\nu^{\prime}\left\{\Re(\zeta)=0,\Im(\zeta)=0\right\}\quad \text{and}\quad V=\nu^{\prime} \{\Re(\omega)=0,\Im(\omega)=0\}
\]
are well-defined distributions on $\twist$ that are invariant with respect to $J$ (and $\mathfrak{J}$). Hence we have $T\twist=H\oplus V$. 
\end{rmk}
For the convenience of the reader, we also show~\cite{MR728412,MR812312}:
\begin{ppn}
Suppose the torsion-free connections $\nabla$ and $\nabla^{\prime}$ on $TM$ are projectively equivalent, then they induce the same integrable almost complex structure $J$ on $\twist$.  
\end{ppn}
\begin{proof}
The connections $\nabla$ and $\nabla^{\prime}$ are projectively equivalent if and only if there exists a $1$-form $\xi$ on $M$ such that $\nabla^{\prime}=\nabla+\mathrm{Sym}(\xi)$. Writing $\theta=(\theta^i_j)$ for the connection form of $\nabla$ on $\frame$ and $\upsilon^*\xi=x_i\omega^i$ for real-valued functions $x_i$ on $\frame$, the connection form $\theta^{\prime}$ of $\nabla^{\prime}$ becomes
\[
\theta^{\prime}=\theta+\begin{pmatrix} 2x_1\omega^1+x_2\omega^2 & x_2\omega^1 \\ x_1\omega^2 & x_1\omega^1+2x_2\omega^2\end{pmatrix}.
\]
Consequently, we obtain 
\[
\zeta^{\prime}=\zeta+(x_1\omega^1-x_2\omega^2)+\i(x_2\omega^1+x_1\omega^2)=\zeta+(x_1+\i x_2)\omega
\]
which shows that the complex span of $\omega,\zeta$ is the same as the one of $\omega,\zeta^{\prime}$ and hence the two integrable almost complex structures are the same. 
\end{proof}
\begin{rmk}
For a projective structure $\mathfrak{p}$ on $M$ we will write $J_{\mathfrak{p}}$ for the integrable almost complex structure defined by any representative connection $\nabla \in \mathfrak{p}$. For a projective structure $\mathfrak{p}$ and a volume form $\sigma$ on $M$ we will write $\mathfrak{J}_{\mathfrak{p},\sigma}$ for the non-integrable almost complex structure defined by the representative connection ${}^{\sigma}\nabla \in \mathfrak{p}$. Note that the non-integrable almost complex structure is not projectively invariant. 
\end{rmk}

Recall that a Weyl connection for a conformal structure $[g]$ is a torsion-free connection ${}^{[g]}\nabla$ on $TM$ which preserves $[g]$. Fixing a Riemannian metric $g \in [g]$, the Weyl connections for $[g]$ can be written as ${}^{[g]}\nabla={}^g\nabla+g\otimes \dualbeta-\mathrm{Sym}(\beta)$
for some $1$-form $\beta$ on $M$ and where $\dualbeta$ denotes the $g$-dual vector field to $\beta$. In~\cite{arXiv:1510.01043} and in the language of thermostats in~\cite{MR4109900}, it was observed that for every choice of a conformal structure $[g]$ on a projective surface $(M,\mathfrak{p})$, there exists a unique Weyl connection ${}^{[g]}\nabla$ for $[g]$ and a unique $1$-form $\varphi \in \phispace$ so that ${}^{[g]}\nabla+\varphi$ is a representative connection of $\mathfrak{p}$. Moreover the endomorphism $\varphi(X)$ is symmetric with respect to $[g]$ for every vector field $X$ on $M$. We call ${}^{[g]}\nabla$ the Weyl connection determined by $[g]$. Explicitly, if $\nabla$ is any representative connection of $\mathfrak{p}$, $g \in [g]$ and if we define a vector field $\dualbeta=\frac{3}{4}\mathrm{tr}\left(g^{\sharp}\otimes (\nabla-{}^g\nabla)_0\right)$,
then
\[
\varphi=\left(\nabla -{}^g\nabla -g \otimes \dualbeta\right)_0\qquad\text{and}\qquad {}^{[g]}\nabla={}^g\nabla+g\otimes \dualbeta-\mathrm{Sym}(\beta),
\]
where $A_0$ denotes the trace-free part of a tensor field $A \in \Gamma(S^2(T^*M)\otimes TM)$. We refer the reader to~\cite{arXiv:1510.01043,MR4109900} for a proof that ${}^{[g]}\nabla$ and $\varphi$ do satisfy the claimed properties.

\begin{ppn}\label{ppn:pseudohol}
Let $(M,\mathfrak{p})$ be an oriented projective surface and $g$ a Riemannian metric on $M$. Then we have:
\begin{itemize}
\item[(i)] $\mathfrak{p}$ contains a Weyl connection for $[g]$ if and only if $[g] : M \to (\twist,J_\mathfrak{p})$ is a holomorphic curve;
\item[(ii)] the Weyl connection determined by $[g]$ is the Levi-Civita connection of $g$ if and only if $[g] : M \to (\twist,\mathfrak{J}_{\mathfrak{p},dA_g})$ is a pseudo-holomorphic curve.
\end{itemize}  
\end{ppn}
\begin{rmk}\label{rmk:defpseudohol}
Here we say $[g] : M \to (Z,\mathfrak{J})$ is a (pseudo-)holomorphic curve if the image $\Sigma=[g](M)\subset Z$ admits the structure of a (pseudo-)holomorphic curve. By admitting the structure of (pseudo-)holomorphic curve, we mean that $\Sigma$ can be equipped with a complex structure $J$, so that the inclusion $\iota : \Sigma \to Z$ is $(J,\mathfrak{J})$-linear, that is, satisfies $\mathfrak{J}\circ \iota^{\prime}=\iota^{\prime} \circ J$. 
\end{rmk}

As an immediate consequence, we obtain the \cref{mainthm:pseudohol}:
\begin{proof}[Proof of \cref{mainthm:pseudohol}]
The projective structure $\mathfrak{p}$ is metrisable by $g$ if and only if the Weyl connection determined by $[g]$ is the Levi-Civita connection of $g$ and the $1$-form $\varphi$ vanishes identically. The claim follows by applying \cref{ppn:pseudohol}. 
\end{proof}
For the proof of \cref{ppn:pseudohol} we also need the following Lemma:
\begin{lem}\label{Lemma:holcurvlemma}
Let $(Z,\mathfrak{J})$ be an almost complex four-manifold and $\omega,\chi \in \Omega^1(Z,\C)$ a basis for the $(1,\! 0)$-forms of $Z$. Suppose $\iota : \Sigma \to Z$ is an immersed surface so that $\iota^*(\omega\wedge\ov{\omega})$ is non-vanishing on $\Sigma$. Then $\Sigma$ admits the structure of a pseudo-holomorphic curve if and only if $\iota^*(\omega\wedge\chi)$ vanishes identically on $\Sigma$.
\end{lem}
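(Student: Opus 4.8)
The plan is to reduce the statement to a pointwise linear-algebra fact about when the pullback $\iota^*$ carries $(1,\!0)$-forms to $(1,\!0)$-forms, exploiting throughout that $\Sigma$ is two-dimensional. First I would record the role of the non-degeneracy hypothesis: since $\iota^*(\omega\wedge\ov{\omega})$ is nowhere zero, $\iota^*\omega$ is a nowhere-vanishing complex $1$-form on $\Sigma$ with $\iota^*\omega\wedge\ov{\iota^*\omega}\neq 0$, so that the pair $(\iota^*\omega,\iota^*\ov{\omega})$ is a coframe for $T^*\Sigma\otimes\C$. Consequently every complex $1$-form on $\Sigma$, in particular $\iota^*\chi$, can be written uniquely as $\iota^*\chi=p\,\iota^*\omega+q\,\iota^*\ov{\omega}$ for smooth complex functions $p,q$; wedging with $\iota^*\omega$ gives $\iota^*(\omega\wedge\chi)=q\,\iota^*(\omega\wedge\ov{\omega})$, so that the vanishing of $\iota^*(\omega\wedge\chi)$ is equivalent to $q\equiv 0$, i.e.\ to $\iota^*\chi$ being a pointwise multiple of $\iota^*\omega$.

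The key observation I would establish next is a duality: for an almost complex structure $J$ on $\Sigma$, the immersion $\iota$ is $(J,\mathfrak{J})$-linear, $\iota'\circ J=\mathfrak{J}\circ\iota'$, if and only if $\iota^*$ carries every $(1,\!0)$-form of $(Z,\mathfrak{J})$ to a $(1,\!0)$-form of $(\Sigma,J)$. The mechanism is the characterisation that a complex $1$-form $\alpha$ is of type $(1,\!0)$ precisely when $\alpha\circ\mathfrak{J}=\i\alpha$. Granting linearity, for a $(1,\!0)$-form $\alpha$ on $Z$ and $v\in T\Sigma$ one has $(\iota^*\alpha)(Jv)=\alpha(\iota'Jv)=\alpha(\mathfrak{J}\iota'v)=\i\,\alpha(\iota'v)=\i(\iota^*\alpha)(v)$, so $\iota^*\alpha$ is of type $(1,\!0)$ on $\Sigma$. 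Conversely, if $\iota^*$ preserves the $(1,\!0)$-type then the same chain of equalities gives $\alpha(\iota'Jv)=\alpha(\mathfrak{J}\iota'v)$ for every $(1,\!0)$-form $\alpha$; taking complex conjugates, and using that $\iota'Jv$ and $\mathfrak{J}\iota'v$ are real vectors, extends the equality to all $(0,\!1)$-forms, and since the $(1,\!0)$- and $(0,\!1)$-forms together span $T^*Z\otimes\C$ the two real vectors coincide, giving $\iota'J=\mathfrak{J}\iota'$.

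With these two facts in hand the proof proceeds quickly. For the forward implication, suppose $\Sigma$ carries a complex structure $J$ making $\iota$ a pseudo-holomorphic curve. By the duality, $\iota^*\omega$ and $\iota^*\chi$ are both $(1,\!0)$-forms on $\Sigma$; since $\Sigma$ is a surface the $(1,\!0)$-forms form a complex line and $\iota^*\omega$ is non-vanishing, so $\iota^*\chi$ is a multiple of $\iota^*\omega$ and hence $\iota^*(\omega\wedge\chi)=0$. For the converse, assume $\iota^*(\omega\wedge\chi)=0$; by the first paragraph $\iota^*\chi=p\,\iota^*\omega$. Declaring $\iota^*\omega$ to be of type $(1,\!0)$ defines an almost complex structure $J$ on $\Sigma$ — legitimate precisely because $(\iota^*\omega,\iota^*\ov{\omega})$ is a coframe — and on a surface every almost complex structure is integrable, so $J$ is a genuine complex structure. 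With respect to $J$ both $\iota^*\omega$ and $\iota^*\chi$ are of type $(1,\!0)$, hence $\iota^*$ carries the $(1,\!0)$-forms of $Z$ into those of $\Sigma$, and the duality yields $\iota'\circ J=\mathfrak{J}\circ\iota'$, i.e.\ $\Sigma$ admits the structure of a pseudo-holomorphic curve.

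I expect the main obstacle to be the careful bookkeeping in the duality step, where one must pass cleanly between the tangent-map condition $\iota'\circ J=\mathfrak{J}\circ\iota'$ and the cotangent condition that $\iota^*$ preserve the $(1,\!0)$-type, and in particular handle the conjugate $(0,\!1)$-forms correctly using the reality of the vectors involved. Everything else is a short computation that rests on the surface being two-dimensional, so that the $(1,\!0)$-forms form a line and the integrability needed in \cref{rmk:defpseudohol} is automatic.
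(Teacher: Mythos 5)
Your proof is correct and follows essentially the same route as the paper's: the paper's (much terser) proof likewise observes that $\iota^*\omega,\iota^*\ov{\omega}$ span the complex $1$-forms on $\Sigma$ and invokes the same duality between $(J,\mathfrak{J})$-linearity of $\iota$ and $\iota^*$ preserving $(1,\!0)$-forms. You have simply filled in the details that the paper leaves to the reader, including the verification of the duality and the integrability of the induced almost complex structure on the surface.
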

\begin{proof}
Since $\iota^*(\omega\wedge\ov{\omega})$ is non-vanishing on $\Sigma$, the forms $\iota^*\omega$ and $\iota^*\ov{\omega}$ span the complex-valued $1$-forms on $\Sigma$. Recall that  $\iota : \Sigma \to Z$ is $(j,\mathfrak{J})$-linear if and only if the pullback of every $(1,\! 0)$-form on $Z$ is a $(1,\! 0)$-form on $\Sigma$, the claim follows. 
\end{proof}

\begin{proof}[Proof of \cref{ppn:pseudohol}]
Let $g$ be a Riemannian metric on the oriented projective surface $(M,\mathfrak{p})$. Without losing generality we can assume that the projective structure $\mathfrak{p}$ arises from a connection of the form ${}^{[g]}\nabla+\varphi$. 
The Weyl connection ${}^{[g]}\nabla$ satisfies 
\[
{}^{[g]}\nabla dA_g=2\beta\otimes dA_g
\]
for some $1$-form $\beta$ on $M$ and hence can be written as ${}^{[g]}\nabla={}^g\nabla+g\otimes \beta^{\sharp}-\mathrm{Sym}(\beta)$.

Now suppose $\nabla \in \mathfrak{p}$ preserves the volume form $dA_g$ of $g$. Then, by \cref{Lemma:stillpresvolume} it must be of the form
\begin{equation}\label{eq:volcon}
\nabla={}^{[g]}\nabla+\varphi+\frac{2}{3}\mathrm{Sym}(\beta)={}^g\nabla+g\otimes \beta^{\sharp}-\frac{1}{3}\mathrm{Sym}(\beta)+\varphi.
\end{equation}
\cref{Proposition:exalmostcplx} and \cref{Lemma:holcurvlemma} imply that the condition that $[g] : M \to \twist$ defines a pseudo-holomorphic curve with respect to $J_{\mathfrak{p}}$ respectively $\mathfrak{J}_{\mathfrak{p},dA_g}$ is equivalent to the condition that on the pullback bundle $[g]^*F \to M$ the form $\omega\wedge \zeta$, respectively $\omega\wedge\ov{\zeta}$ vanishes identically, where $\zeta$ is computed from the connection form of $\nabla$ and where we think of $F$ as fibering over $Z$. Keeping this in mind we now compute the pullback of the forms $\zeta$ and $\ov{\zeta}$ to $[g]^*F$. Recall that the semi-basic $1$-forms on $F$ are spanned by the components of $\omega$, hence there exist unique real-valued functions $g_{ij}=g_{ji}$ on $F$ so that $\upsilon^*g=g_{ij}\omega^i\otimes \omega^j$. Likewise, there exist unique real-valued functions $b_i$ on $F$ so that $\upsilon^*\beta=b_i\omega^i$ and unique real-valued function $A^i_{jk}=A^i_{kj}$ on $F$ so that $(\upsilon^*\varphi)^i_j=A^i_{jk}\omega^k$. The functions $A^i_{jk}$ satisfy furthermore $A^k_{ki}=0$ and $g_{ik}A^k_{jl}=g_{jk}A^k_{il}$ since $\varphi$ takes values in the endomorphisms of $TM$ that are trace-free and symmetric with respect to $g$. The Levi-Civita connection $(\psi^i_j)$ of $g$ is the unique principal $\mathrm{GL}^+(2,\R)$-connection on $F$ that satisfies
\begin{align*}
\d\omega^i&=-\psi^i_j\wedge\omega^j,\\
\d g_{ij}&=g_{ik}\psi^k_j+g_{kj}\psi^k_i.
\end{align*}
The pullback bundle $P:=[g]^*F$ is cut out by the equations $g_{11}=g_{22}$ and $g_{12}=0$. On $P$ we have
\begin{align*}
0=\d g_{12}&=g_{11}\psi^1_2+g_{22}\psi^2_1=g_{11}(\psi^1_2+\psi^2_1),\\
0=\d g_{11}-\d g_{22}&=2g_{11}\psi^1_1-2g_{22}\psi^2_2=g_{11}(\psi^1_1-\psi^2_2)
\end{align*}
On $P$ the condition $g_{ik}A^k_{jl}=g_{jk}A^k_{il}$ implies $A^2_{11}=-A^2_{22}$ and $A^1_{22}=-A^1_{11}$. Writing $A^1_{11}=a_1$ and $A^2_{22}=a_2$ and using~\eqref{eq:volcon}, the connection form $\theta$ of $\nabla$ thus becomes
\begin{multline*}
\theta=\begin{pmatrix} \psi^1_1 & -\psi^2_1 \\ \psi^2_1 & \psi^1_1\end{pmatrix}+\begin{pmatrix} b_1\omega^1 & b_1\omega^2 \\ b_2\omega^1 & b_2\omega^2\end{pmatrix}
-\frac{1}{3}\begin{pmatrix} 2b_1\omega^1+b_2\omega^2 & b_2\omega^1 \\ b_1\omega^2 & b_1\omega^1+2b_2\omega^2\end{pmatrix}\\
+\begin{pmatrix} a_1\omega^1-a_2\omega^2 & -a_2\omega^1-a_1\omega^2 \\ -a_2\omega^1-a_1\omega^2 & -a_1\omega^1+a_2\omega^2\end{pmatrix} 
\end{multline*}
Introducing the complex notation $a=a_1+\i a_2$ and $b=\frac{1}{2}(b_1-\i b_2)$, we obtain from a simple calculation
\[
\zeta=(\theta^1_1-\theta^2_2)+\i(\theta^1_2+\theta^2_1)=\frac{4}{3}\ov{b}\omega+2\ov{a}\ov{\omega},
\] 
where we write $\omega=\omega^1+\i\omega^2$. 

Finally, since $[g] : M \to (\twist,J_{\mathfrak{p}})$ is a holomorphic curve if and only if $\omega\wedge\zeta$ vanishes identically on $P$, it follows that $[g] : M \to (\twist,J_{\mathfrak{p}})$ is a holomorphic curve if and only if 
\[
0=\omega\wedge\zeta=2\ov{a}\omega\wedge\ov{\omega}
\]
which is equivalent to $\varphi$ vanishing identically. This shows (i). 

Likewise $[g] : M \to (\twist,\mathfrak{J}_{\mathfrak{p},dA_g})$ is a pseudo-holomorphic curve if and only if 
\[
0=\omega\wedge\ov{\zeta}=\frac{4}{3}b\omega\wedge\ov{\omega}
\]
on $P$. This is equivalent to $\beta$ vanishing identically. This shows (ii). 
\end{proof}
As a corollary we obtain:
\begin{cor}\label{Corollary:localexistence}
Let $(M,\mathfrak{p})$ be a projective surface. Then locally $\mathfrak{p}$ contains
\begin{itemize}
\item[(i)] a Weyl connection ${}^{[g]}\nabla$ for some conformal structure $[g]$;
\item[(ii)] a connection of the form ${}^{\tilde{g}}\nabla+\varphi$ for some Riemannian metric $\tilde{g}$ and some $\varphi \in \phispace$ with $\varphi$ taking values in the endomorphisms that are $\tilde{g}$-symmetric. 
\end{itemize}
\end{cor}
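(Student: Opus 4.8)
The plan is to reduce both statements to the existence of a suitable local section of $\pi : \twist \to M$ and then to invoke \cref{ppn:pseudohol}. Recall from \cref{rmk:hinvariance} that the horizontal distribution $H$ is invariant under both $J_{\mathfrak{p}}$ and $\mathfrak{J}_{\mathfrak{p},\sigma}$ and that, since $T\twist=H\oplus V$ with $V$ the vertical for $\pi$, the map $\pi^{\prime}$ restricts to an isomorphism $H_j \to T_{\pi(j)}M$ for every $j \in \twist$. Thus at any chosen point $j_0 \in \twist$ the plane $H_{j_0}$ is a complex line for the relevant almost complex structure, and any immersed surface through $j_0$ tangent to $H_{j_0}$ is transverse to the fibre at $j_0$, hence by the implicit function theorem locally a graph over $M$, that is, a local section of $\pi$. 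A local section of $\pi$ is precisely a local conformal structure $[g]$, so it remains to produce, through a prescribed point and tangent to the horizontal complex line there, a holomorphic curve for $J_{\mathfrak{p}}$ in case~(i) and a pseudo-holomorphic curve for the appropriate $\mathfrak{J}$ in case~(ii).

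For~(i) I would use that $J_{\mathfrak{p}}$ is integrable by \cref{Proposition:exalmostcplx}, so that near $j_0$ the manifold $(\twist,J_{\mathfrak{p}})$ is biholomorphic to an open subset of $\C^2$. Choosing holomorphic coordinates in which $H_{j_0}$ is a coordinate complex line, the corresponding coordinate disk is a holomorphic curve through $j_0$ tangent to $H_{j_0}$; equivalently, one may appeal directly to \cite[Theorem III]{MR0149505}. Being tangent to the horizontal line it is a local section $[g] : M \to \twist$, and \cref{ppn:pseudohol}~(i) then shows that $\mathfrak{p}$ contains the Weyl connection ${}^{[g]}\nabla$ for this $[g]$.

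For~(ii) the difficulty is that $\mathfrak{J}_{\mathfrak{p},\sigma}$ depends on the choice of volume form $\sigma$, so one cannot fix the target almost complex structure only after the conformal class has been found. I would therefore first fix an arbitrary volume form $\sigma$ on $M$, which determines ${}^{\sigma}\nabla \in \mathfrak{p}$ and hence $\mathfrak{J}_{\mathfrak{p},\sigma}$. Applying \cite[Theorem III]{MR0149505} to this fixed (non-integrable) almost complex structure produces a local pseudo-holomorphic curve through $j_0$ tangent to $H_{j_0}$, which as above is a local section defining a conformal structure $[g]$. Within $[g]$ I would then select the unique representative metric $\tilde{g}$ with $dA_{\tilde{g}}=\sigma$, which is possible since rescaling $g \mapsto \ee^{2f}g$ scales the area form by $\ee^{2f}$. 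For this $\tilde{g}$ one has $\mathfrak{J}_{\mathfrak{p},dA_{\tilde{g}}}=\mathfrak{J}_{\mathfrak{p},\sigma}$, so the section is pseudo-holomorphic with respect to $\mathfrak{J}_{\mathfrak{p},dA_{\tilde{g}}}$, and \cref{ppn:pseudohol}~(ii) shows that the Weyl connection determined by $[g]$ is ${}^{\tilde{g}}\nabla$. Consequently $\mathfrak{p}$ contains a connection of the form ${}^{\tilde{g}}\nabla+\varphi$ with $\varphi \in \phispace$ taking values in the $\tilde{g}$-symmetric trace-free endomorphisms, as claimed.

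The main obstacle I expect is precisely the graph/section property: the cited existence theorem yields only a pseudo-holomorphic curve tangent to a prescribed complex line, and one must ensure this curve is transverse to the fibres of $\pi$ so as to define a genuine conformal structure rather than a curve that is tangent to, or runs into, a fibre. This is guaranteed by taking the prescribed complex line to be the horizontal $H_{j_0}$ and using that $\pi^{\prime}|_{H}$ is an isomorphism. The secondary compatibility $dA_{\tilde{g}}=\sigma$ needed in~(ii) is resolved by fixing $\sigma$ before extracting the conformal class and only afterwards rescaling the metric within it.
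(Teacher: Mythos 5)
Your proposal is correct and follows essentially the same route as the paper: fix a volume form $\sigma$ first, use the $\mathfrak{J}_{\mathfrak{p},\sigma}$-invariance of the horizontal plane $H_{j}$ together with the Nijenhuis--Woolf existence theorem to produce a (pseudo-)holomorphic curve tangent to $H_{j}$, observe that transversality to the fibres makes it a local section $[g]$, and then choose $\tilde{g}\in[g]$ with $dA_{\tilde{g}}=\sigma$ before applying \cref{ppn:pseudohol}. The paper handles case (i) by the same mechanism, merely noting that integrability of $J_{\mathfrak{p}}$ makes the existence of the holomorphic curve elementary, exactly as you do with holomorphic coordinates.
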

\begin{rmk}
The first statement of \cref{ppn:pseudohol} and \cref{Corollary:localexistence} was previously obtained in~\cite{MR3144212}.
\end{rmk}

\begin{proof}[Proof of \cref{Corollary:localexistence}]
We first consider the case (ii). We fix a volume form $\sigma$ on $M$. We need to show that in a neighbourhood $U_x$ of every point $x \in M$ there exists a conformal structure $[g]$ which is a pseudo-holomorphic curve into the total space of the bundle $\pi : Z \to M$, where we equip $Z$ with the almost complex structure $\mathfrak{J}_{\mathfrak{p},\sigma}$. Choose $j \in Z$ with $\pi(j)=x$. Recall from \cref{rmk:hinvariance} that the subspace $H_j \subset T_jZ$ is invariant under $\mathfrak{J}_{\mathfrak{p},\sigma}$. Now~\cite[Theorem III]{MR0149505} implies that there exists a pseudo-holomorphic curve $\Sigma\subset (Z,\mathfrak{J}_{\mathfrak{p},\sigma})$ which contains $j$ and has $H_j$ as its tangent space at $j$. Since $H_j\subset T_jZ$ is horizontal, the restriction $\pi^{\prime}_j|_{H_j} : H_j \to T_xM$ is an isomorphism. Therefore, the restriction of $\pi$ to $\Sigma$ is a local diffeomorphism in some neighbourhood of $j$. Hence there exists a neighbourhood $U_x$ of $x \in M$ and a section $[g] : U_x \to Z$ so that $[g](U_x)\subset \Sigma$. Thus, $[g] : U_x \to (Z,\mathfrak{J}_{\mathfrak{p},\sigma})$ is a pseudo-holomorphic curve in the sense of \cref{rmk:defpseudohol}. Taking $\tilde{g}$ to be the unique metric in $[g]$ with volume form $\sigma$ and applying \cref{ppn:pseudohol} shows the claim. The case (i) follows in the same fashion, except that~\cite{MR0149505} is not needed, as $J_{\mathfrak{p}}$ is integrable and hence the construction of a holomorphic curve realising a prescribed $J_{\mathfrak{p}}$-invariant tangent plane is an elementary exercise.  
\end{proof}

\begin{rmk}
Locally we can always find a holomorphic curve $[g] : M \to (Z,J_{\mathfrak{p}})$, but globally this is not always possible. A properly convex projective structure $\mathfrak{p}$ on a closed surface $M$ with $\chi(M)<0$ admits a holomorphic curve $[g] : M \to (Z,J_{\mathfrak{p}})$ if and only if $\mathfrak{p}$ is hyperbolic~\cite{MR4109900}. One would expect that a corresponding global non-existence result should also hold in the pseudo-holomorphic setting for a suitable class of projective surfaces. 
\end{rmk}

\begin{rmk}
If $(M,\mathfrak{p})$ is a closed oriented projective surface of with $\chi(M)<0$, then there exists at most one holomorphic curve $[g] : M \to (Z,J_{\mathfrak{p}})$, see~\cite{MR3384876}. 
\end{rmk}

\begin{rmk}
Hitchin~\cite{MR699802} gave a twistorial construction of (complex) two-di\-men\-sio\-nal holomorphic projective structures. In the holomorphic category such a projective structure corresponds to a complex surface $Z$ having a family of rational curves with self-intersection number one. Denoting the canonical bundle of $Z$ by $K_Z$, such a holomorphic projective surface is metrisable if and only if $K_{Z}^{-2/3}$ admits a holomorphic section which intersects each rational curve in $Z$ at two points~\cite{MR2581355,MR3210600,MR1979367}. 
\end{rmk} 

\begin{rmk}
The notion of a projective structure also makes sense in the complex setting and such structures are referred to as~\textit{c-projective}, see~\cite{arXiv:1512.04516}. Correspondingly, there is a K\"ahler metrisability problem of c-projective structures. Some obstructions to K\"ahler metrisability of a (complex) two-dimen\-sio\-nal c-projective structure have been obtained in~\cite{MR3233113}. 
\end{rmk}
We conclude by describing the holomorphic curves for the standard projective structure $\mathfrak{p}_0$ on the $2$-sphere whose geodesics are the great circles. 
\begin{ex}\label{ex:flat}
Let $S^2$ denote the sphere of radius $1$ centered at the origin in $\R^3$ and $g$ its induced round metric of constant Gauss curvature $1$ whose geodesics are the great circles. We equip $S^2$ with its standard orientation. 

Recall that the unit tangent bundle $\lambda : T_1S^2 \to S^2$ of $(S^2,g)$ carries a canonical coframing $(\omega_1,\omega_2,\psi)$, where $\omega_1,\omega_2$ span the $1$-forms on $T_1S^2$ that are semi-basic for the projection $\lambda$ and $\psi$ denotes the Levi-Civita connection form of $g$. The $1$-forms $(\omega_1,\omega_2,\psi)$ satisfy the structure equations
\begin{equation}\label{eq:strucsphere}
\d\omega_1=-\omega_2\wedge\psi\quad \text{and}\quad \d\omega_2=-\psi\wedge\omega_1\quad \text{and}\quad \d\psi=-\omega_1\wedge\omega_2.
\end{equation}
Let $\hat{g}$ be a Riemannian metric on $S^2$ and write $\lambda^*\hat{g}=\hat{g}_{ij}\omega_i\otimes \omega_j$ for unique real-valued functions $\hat{g}_{ij}=\hat{g}_{ji}$ on $T_1S^2$. Phrased in modern language (c.f.~\cite{MR2581355}) and applied to the case of the $2$-sphere, R.~Liouville's result~\cite{21.0317.01} implies  that if the metrics $\hat{g}$ and $g$  have the same unparametrised geodesics then the functions $h_{ij}:=\hat{g}_{ij}(\hat{g}_{11}\hat{g}_{22}-\hat{g}_{12}^2)^{-2/3}$ satisfy the linear differential equations
\begin{align}\label{eq:difsysliouville}
\begin{split}
\d h_{11}&=-2h_1\omega_2+2h_{12}\psi,\\
\d h_{12}&=h_1\omega_1-h_2\omega_2-(h_{11}-h_{22})\psi,\\
\d h_{22}&=2h_2\omega_1-2h_{12}\psi,
\end{split}
\end{align}
for some smooth real-valued functions $h_i$ on $T_1S^2$. Conversely, a solution to~\eqref{eq:difsysliouville} on $T_1S^2$ satisfying $h_{11}h_{22}-h_{12}^2\neq 0$ gives a Riemannian metric $\hat{g}$ on $S^2 $ with $\lambda^*\hat{g}=(h_{ij}(h_{11}h_{22}-h_{12}^2)^{-2})\omega_i\otimes\omega_j$ and that has the same unparametrised geodesics as $g$. 

Applying the exterior derivative to the above system of equations implies the existence of a unique real-valued function $h$ on $T_1S^2$ such that
\begin{align*}
\d h_1&=-h_{12}\omega_1+(h_{11}+h)\omega_2+h_2\psi,\\
\d h_2&=-(h_{22}+h)\omega_1+h_{12}\omega_2-h_1\psi.
\end{align*}
Taking yet another exterior derivative gives that
\[
\d h=-2h_1\omega_1+2h_2\omega_2.
\]
Writing
\[
\vartheta=\begin{pmatrix} 0 & -\omega_1 & -\omega_2 \\ \omega_1 & 0 & -\psi \\ \omega_2 & \psi & 0 \end{pmatrix}\quad \text{and}\quad H=\begin{pmatrix} h & h_2 & -h_1 \\ h_2 & -h_{22} & h_{12} \\ -h_1 & h_{12} & -h_{11} \end{pmatrix}
\]
the above system of differential equations can be expressed as
\[
\d H+\vartheta H+H \vartheta^t=0. 
\]
The structure equations~\eqref{eq:strucsphere} imply that $\d\vartheta+\vartheta\wedge\vartheta=0$, hence we may write $\vartheta=\Xi^{-1} \d \Xi$ for some diffeomorphism $\Xi : T_1S^2 \to \mathrm{SO}(3)$. It follows that the solutions are of the form $H=\Xi^{-1}C(\Xi^{-1})^t$ for some constant symmetric $3$-by-$3$ matrix $C$. In particular, taking $C=AA^t$ for some $A \in \mathrm{SL}(3,\R)$, we obtain a solution $H_A$ providing a metric $\hat{g}_A$ on $S^2$ having the great circles as its geodesics. 

Finally, in order to construct the holomorphic curve $[\hat{g}_A] : S^2 \to Z$ from $H_A$, we interpret $Z$ as an associated bundle to $T_1S^2$. We will only give a sketch of the construction and refer the reader to~\cite[\S 4]{MR4109900} for additional details. The orientation and metric turn $S^2$ into a Riemann surface and hence a conformal structure on $S^2$ is given in terms of a Beltrami differential. Denoting the canonical bundle of $S^2$ by $K_{S^2}$, a Beltrami differential is a section $\mu$ of $\ov{K_{S^2}}\otimes K_{S^2}^{-1}$ satisfying $|\mu(x)|<1$ for all $x \in S^2$, where $|\cdot|$ denotes the norm induced by the natural Hermitian bundle metric on $\ov{K_{S^2}}\otimes K_{S^2}^{-1}$. The Riemannian metric $g$ gives an isomorphism $\ov{K_{S^2}}\otimes K_{S^2}^{-1}\simeq K_{S^2}^{-2}$ and thus $Z$ may be identified with $T_1S^2\times_{S^1} \mathbb{D}$, where $S^1$ acts by usual rotation on $T_1S^2$ and by $z \cdot \mathrm{e}^{\i\phi}=z\mathrm{e}^{-2\i\phi}$ on the open unit disk $\mathbb{D}\subset \C$. A holomorphic curve $[\hat{g}] : S^2 \to Z$ is therefore represented by a map $\mu : T_1S^2 \to \mathbb{D}$. Explicitly, the conformal structure arising from a Riemannian metric $\hat{g}$ on $S^2$ is represented by the map
\[
\mu=\frac{p-q+2\i r}{p+q+2\sqrt{pq-r^2}},
\]
where we write $\lambda^*\hat{g}=p\omega_1\otimes \omega_1+2r\omega_1\circ \omega_2+q\omega_2\otimes \omega_2$ for unique real-valued functions $p,q,r$ on $T_1S^2$. In our case, the holomorphic curve $[\hat{g}_A] : S^2 \to Z$ is thus represented by $\mu$ with
\[
p=\frac{h_{11}}{(h_{11}h_{22}-h_{12}^2)^2},\qquad r=\frac{h_{12}}{(h_{11}h_{22}-h_{12}^2)^2}, \qquad q=\frac{h_{22}}{(h_{11}h_{22}-h_{12}^2)^2}
\]
and where the functions $h_{ij}$ arise from $H_A$ as above.
\begin{rmk}
In the case of the standard projective structure on $S^2$ the complex surface $(Z,J_{\mathfrak{p}_0})$ is biholomorphic to $\mathbb{CP}^2\setminus\mathbb{RP}^2$ and moreover, the image of a holomorphic curve $[g] : S^2 \to Z$ is a  smooth quadric, see~\cite{MR3144212}. Trying to explicitly relate the holomorphic curve $[\hat{g}_A]$ to its image quadric does in general however not seem to give manageable expressions. 
\end{rmk} 
\end{ex}

\providecommand{\mr}[1]{\href{http://www.ams.org/mathscinet-getitem?mr=#1}{MR~#1}}
\providecommand{\zbl}[1]{\href{http://www.zentralblatt-math.org/zmath/en/search/?q=an:#1}{zbM~#1}}
\providecommand{\arxiv}[1]{\href{http://www.arxiv.org/abs/#1}{arXiv:#1}}
\providecommand{\doi}[1]{\href{http://dx.doi.org/#1}{DOI~#1}}
\providecommand{\href}[2]{#2}

\end{document}